\def\qed{\ifmmode\square\else\nolinebreak\hfill
$\Box$\fi\par\vskip12pt}
\def\l{\langle} \def\r{\rangle} 
\def\div{\,{\Huge|}\,}
\def\C{{\rm C}}
\def\D{{\rm D}} \def\Q{{\rm Q}}
\def\S{{\rm S}} 
\def\J{{\rm J}}
\def\soc{{\sf soc}} 
\def\mod{{\sf mod~}}
\def\Aut{{\sf Aut}}
\def\PGammaL{{\rm P\Gamma L}} \def\PSigmaL{{\rm P\Sigma L}}
\def\A{{\rm A}}
\def\PSL{{\rm PSL}}\def\PGL{{\rm PGL}} 
 \def\SL{{\rm SL}}
\def\Ree{{\rm Ree}}
  \def\D{{\rm D}}
\newtheorem{theorem}{Theorem}
\newtheorem{lemma}[theorem]{Lemma}%
\newtheorem{corollary}[theorem]{Corollary}%
\begin{document}

\title{The exceptional Hall numbers}
\thanks{The project was partially supported by the NNSF of China (11931005, 12171223).}

\author{Zheng Guo}
	\address{Department of Mathematics \\
		Southern  University of Science and Technology,  Shenzhen 518055,  P.R.China}
	\email{12131227@mail.sustech.edu.cn}
\author{Yong Hu}
	\email{huy@sustech.edu.cn}
\author{Cai Heng Li}
	\address{Department of Mathematics \\
		SUSTech International Center for Mathematics\\
		Southern  University of Science and Technology,  Shenzhen 518055,  P.R.China}
	\email{lich@sustech.edu.cn}

\date\today

\maketitle

\begin{abstract}
A positive integer $m$ is called a {\it Hall number} if any finite group of order precisely divisible by $m$ has a Hall subgroup of order $m$. We prove that, except for the obvious examples, the three integers 12, 24 and 60 are the only Hall numbers, solving a problem proposed by Jiping Zhang.
\end{abstract}

\section{Introduction}

A divisor $m$ of an integer $n$ is called a {\it Hall divisor} if $n$ is {\it precisely divisible} by $m$, namely, $\gcd\left(m,{n\over m}\right)=1$.
A positive integer $m$ is a {\it Hall number} if any finite group of order having a Hall divisor $m$ has a Hall subgroup of order $m$.
Attempting to extend Sylow's theorem and Hall's theorem for solvable groups, Jiping Zhang proposes to determine Hall numbers.
By Sylow's theorem, a prime power is a Hall number, and Hall's theorem for finite solvable groups implies that $m$ is a Hall number if $m$ is even with $m/2$ odd.
The following theorem shows that, except for these two families of integers, 12, 24 and 60 are the only exceptional Hall numbers.
This solves the problem of Zhang regarding Hall numbers.

\begin{theorem}\label{thm:main}
A positive integer $m$ is a Hall number if and only if one of the following conditions holds:

\begin{itemize}
  \item[{\rm(1)}] $m$ is a prime power.
  \item[{\rm(2)}] $m$ is even with $m/2$ odd.
  \item[{\rm(3)}] $m=12$, $24$, or $60$.
\end{itemize}
\end{theorem}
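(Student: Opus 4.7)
My plan is to prove both directions of the equivalence. The sufficiency of families (1) and (2) is classical, as noted in the introduction: Sylow's theorem covers prime powers, and for $m=2t$ with $t$ odd, the Feit--Thompson odd order theorem combined with Hall's theorem for solvable groups yields a Hall subgroup of order $m$ (the order-$2$ Sylow gives a normal $2$-complement of odd, hence solvable, order, inside which a Hall subgroup of order $t$ exists, then extends). The substantive tasks are therefore (A) showing that $12$, $24$ and $60$ are Hall numbers, and (B) exhibiting, for every other $m$, a finite group $G$ with $m$ a Hall divisor of $|G|$ but no subgroup of order $m$.

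For task (A), I would argue by minimal counterexample. Let $m\in\{12,24,60\}$ and let $G$ be of least order with $m$ precisely dividing $|G|$ but with no subgroup of order $m$. By Hall's theorem $G$ is non-solvable. Taking a minimal normal subgroup $N$ and inducting on $G/N$ in the elementary abelian case (using Schur--Zassenhaus to lift back when $|N|$ is coprime to $m$), the remaining possibility is that $N$ is a direct product of isomorphic non-abelian simple groups and $G$ embeds into $N\cdot\Out(N)$. The classification of finite simple groups then restricts the possible composition factors to a short list, and I would verify in each case that the known subgroup structure (notably the chain $A_4\le S_4\le A_5\le A_n$ and the Dickson list for $\PSL(2,q)$) already furnishes a subgroup of order $m$, contradicting minimality.

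For task (B), the $m$ to be excluded split into two families: odd $m$ with at least two distinct prime divisors, and $m$ divisible by $4$ with $m\notin\{12,24,60\}$. For odd $m=p_1^{a_1}\cdots p_k^{a_k}$ with $k\ge2$, I would choose a prime power $q$ so that $m$ is a Hall divisor of $|\PSL(2,q)|$; Dickson's classification of subgroups of $\PSL(2,q)$ then lets me verify that no subgroup of order $m$ exists. For $m$ divisible by $4$, analogous constructions using larger alternating groups, projective linear groups, or their direct products take care of the remaining values, with the triple $12,24,60$ emerging precisely as the arithmetic boundary where such constructions break down.

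The main obstacle is the uniformity of task (B): parametrising the excluded $m$ so that each admits a concrete counterexample $G$ satisfying $\gcd(m,|G|/m)=1$ (precise, not mere, divisibility) and lacking a subgroup of order $m$. I expect to rely on CFSG-based classifications of Hall $\pi$-subgroups in finite simple groups, due to Gross, Arad--Fisman, and Revin--Vdovin, to certify non-existence of such subgroups in the candidate groups. Task (A) will itself be delicate for $m=60$, since many simple groups have order divisible by $60$, and each must be examined to confirm it contains a subgroup of order $60$ whenever the ambient $G$ has $60$ as a Hall divisor of its order.
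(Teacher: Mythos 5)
Your outline identifies the right two tasks and even the right family of counterexample groups, but as written it has two genuine gaps, one in each direction.

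For the necessity direction (your task (B)), the entire difficulty is concentrated in a step you state in one clause: ``I would choose a prime power $q$ so that $m$ is a Hall divisor of $|\PSL(2,q)|$.'' The existence of such a $q$ is not obvious and is exactly where the paper has to work: writing $m=ab$ with $a,b>2$ coprime (which is possible for every $m$ outside families (1) and (2), including those divisible by $4$), one needs a prime $p$ with $a$ a \emph{Hall} divisor of $p-1$ and $b$ a \emph{Hall} divisor of $p+1$ simultaneously; the paper proves this via Dirichlet's density theorem together with an inclusion--exclusion estimate showing that the primes $p\equiv\xi\ (\mathrm{mod}\ m)$ with $\gcd(m,(p-\xi)/m)=1$ have positive density. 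Your sketch supplies no mechanism for producing $q$ with the \emph{precise}-divisibility condition, and your fallback for $4\mid m$ (``larger alternating groups \dots or their direct products'') is both unnecessary --- $\PSL_2(p)$ with $p$ prime handles every excluded $m$ uniformly, since Dickson's list forces any Hall subgroup of order $ab$ to lie in $\C_p{:}\C_{(p-1)/2}$, $\D_{p\pm1}$, $\A_4$, $\S_4$ or $\A_5$, each of which is ruled out --- and problematic, since controlling $\gcd(m,|G|/m)=1$ for alternating groups or certifying non-existence of Hall subgroups in direct products is harder than the problem you are replacing.

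For the sufficiency of $12$, $24$, $60$ (your task (A)), the minimal-counterexample induction does not close as described. When the minimal normal subgroup $N$ is an elementary abelian $p$-group with $p\mid m$, a subgroup of order $m$ must contain $N$ (by the precise-divisibility hypothesis), so you need $G/N$ to contain a subgroup of order $m/|N|$; but $m/|N|$ need not be a Hall number (e.g.\ $m=60$, $|N|=4$ requires a Hall subgroup of order $15$, and $15$ is not a Hall number), so neither minimality nor the already-established families apply. The paper instead passes to the solvable radical $R$, handles the cases $5\mid|R|$ and $2\mid|R|$ explicitly via Burnside's normal $p$-complement theorem, and --- crucially --- observes that $|G|_2\in\{4,8\}$ forces the socle of $G/R$ to have abelian or $\D_8$ Sylow $2$-subgroups, so that Walter's theorem and Gorenstein--Walter (not the full CFSG in an unstructured way) give the explicit list $\PSL_2(q)$, $\J_1$, $\Ree(3^f)$. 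Your appeal to ``CFSG restricts the composition factors to a short list'' is not justified without isolating this $2$-local constraint: infinitely many simple groups have order divisible by $12$, $24$ or $60$, and it is only the bound on the Sylow $2$-subgroup (plus $|G|_3=3$, $|G|_5\le 5$) that cuts the list down.
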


The nonsolvable groups corresponding to the exceptional Hall numbers are classified below.

\begin{corollary}\label{cor:gps}
Let $G$ be a finite nonsolvable group which has a Hall subgroup $H$ of order $m$, where $m\in\{12,24,60\}$.
Then there exists a solvable subgroup $R\lhd G$ such that $\gcd(|R|,m)=1$ and $(m,H,G/R)$ lies in the following table, where $q=p^f$ with $p$ prime and $e|f$ with $\gcd(e,m)=1$.
\[\begin{array}{|llll|} \hline
m & H & G/R & \mbox{conditions} \\ \hline

12 & \A_4 & \PSL_2(q).\C_e & q^2\equiv 25, 121\ (\mod 144) \\ \hline

24 & \S_4 & \PGL_2(q).\C_e & q^2\equiv 25, 121\ (\mod 144) \\
  && \PSL_2(q).\C_e & q^2\equiv 49,241\ (\mod 288) \\ \hline
24 &\C_2\times\A_4 & \J_1 &\\
 &&\C_2\times\PSL_2(q).\C_e & q^2\equiv 25, 121\ (\mod 144) \\
 &\SL_2(3) & \SL_2(q).\C_e  & q^2\equiv 25, 121\ (\mod 144)\\ \hline

60 & \A_5 & \PSL_2(5)&\\
&&\PSL_2(q).\C_e & q^2\equiv 120k+1\ (\mod 3600) \\
 &&& \mbox{\small $k\in\{1,7,11,13,17,19,23,29\}$} \\
   & \C_5\times\A_4 & \C_5\times\PSL_2(q).\C_e & q^2\equiv 25, 121\ (\mod 144)\\
 &&\PSL_2(q).\C_{5e} & q^2\equiv 25, 121\ (\mod 144)\\ \hline
\end{array}\]
Moreover, in each line of the table, there are infinitely many choices for the values of $q$.
\end{corollary}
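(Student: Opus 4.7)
The plan is to extract the classification from the structural analysis already carried out in the proof of Theorem~\ref{thm:main}. Start from a nonsolvable group $G$ with a Hall subgroup $H$ of order $m\in\{12,24,60\}$, and let $R$ be the largest normal subgroup of $G$ of order coprime to $m$. Since $m$ is even, $|R|$ is odd, so $R$ is solvable by the Feit--Thompson theorem. Moreover, $HR/R$ is a Hall subgroup of $\bar G:=G/R$ of order $m$, and by maximality $\bar G$ has no nontrivial normal subgroup of order coprime to $m$. It therefore suffices to describe $\bar G$.

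I would then analyse the socle of $\bar G$. Any minimal normal subgroup is either elementary abelian of exponent a prime dividing $m$, or a direct product of isomorphic nonabelian simple groups whose orders share a prime with $m$. Arguing as in the proof of Theorem~\ref{thm:main}, one shows that $\bar G$ has a unique nonabelian composition factor $T$, and that any remaining solvable radical of $\bar G$ is cyclic of prime order dividing $m$ and centralises $T$---this is what allows the direct factors $\C_2$ in $\C_2\times\A_4$ and $\C_5$ in $\C_5\times\A_5$, as well as the central $\C_2$ inside $\SL_2(q)$ and $\SL_2(3)$ that appear in the table. Invoking the CFSG together with the known lists of Hall subgroups in finite simple groups, the candidates for $T$ reduce to the groups $\PSL_2(q)$, together with $\A_5=\PSL_2(5)$ and, only for $m=24$, the Janko group $\J_1$.

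The main computation then pins down, for $T=\PSL_2(q)$, precisely those $q$ for which $T$ or an almost-simple overgroup admits a Hall subgroup of order $m$. By Dickson's classification of subgroups of $\PSL_2(q)$, the isomorphism type of $H\cap T$ is forced to be $\A_4$, $\S_4$, or $\A_5$ according to $m$; the Hall condition $\gcd(m,|T|/m)=1$ then translates into exact prescriptions on the $2$-, $3$-, and $5$-adic valuations of $q\pm 1$, which rearrange into congruences for $q^2$ modulo $144$, $288$, or $3600$. The admissible extension $\PSL_2(q).\C_e$ corresponds to a cyclic subgroup of field automorphisms of order $e\mid f$ with $\gcd(e,m)=1$, so that passing to the extension preserves the Hall condition; the $\J_1$ entry is checked directly, since $|\J_1|=2^3\cdot 3\cdot 5\cdot 7\cdot 11\cdot 19$ and $\J_1$ is known to contain a Hall subgroup $\C_2\times\A_4$ of order $24$. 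Finally, infinitude of $q$ for each congruence class in the table follows from Dirichlet's theorem on primes in arithmetic progressions. The main obstacle will be the bookkeeping in the $m=60$ case, where one must simultaneously impose the $2$-, $3$-, and $5$-adic conditions on $q\pm 1$ together with the embedding condition $q\equiv 0,\pm 1\pmod 5$ for $\A_5\leq\PSL_2(q)$, and verify that the resulting admissible residues modulo $3600$ are exactly $120k+1$ for $k\in\{1,7,11,13,17,19,23,29\}$.
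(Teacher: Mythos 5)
Your proposal is correct in outline and follows essentially the same route as the paper: pass to a quotient by a solvable normal subgroup of order coprime to $m$, identify the (almost) simple group sitting inside the quotient, use Dickson's classification of subgroups of $\PSL_2(q)$ to force $H\cap T\in\{\A_4,\S_4,\A_5\}$ and to translate the Hall condition into the congruences on $q^2$, and finish with an infinitude argument. The differences are organizational rather than conceptual. First, you take $R$ to be the largest normal subgroup of order coprime to $m$ and invoke Feit--Thompson for its solvability, whereas the paper quotients by the full solvable radical (getting an almost simple quotient outright) and then recovers the extra central $\C_2$ or $\C_5$ by applying Burnside's normal $p$-complement theorem to the radical; your version is cleaner at the start but pushes the work into the asserted claim that the residual solvable radical of $\bar G$ is cyclic of prime order centralising $T$ --- that claim is true but is exactly where the paper's Sylow constraint ($|G|_2\in\{4,8\}$, so $|T|_2\in\{4,8\}$) and the Burnside argument are doing the work, so it should not be left as ``arguing as in the proof of the theorem.'' Second, you invoke the CFSG plus general lists of Hall subgroups of simple groups, while the paper needs only the much older classifications of simple groups with abelian Sylow $2$-subgroups and with dihedral Sylow $2$-subgroups (after ruling out $\Q_8$), which is a lighter and more self-contained input. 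Third, for the infinitude you cite Dirichlet on arithmetic progressions; this works, but you must still exhibit, for each line, a unit residue $\xi$ modulo $144$, $288$ or $3600$ with $\xi^2$ in the prescribed class (e.g.\ $\xi=11$ for $q^2\equiv 121$), whereas the paper's density lemma (Lemma~\ref{lem:number}) manufactures the primes with the required exact divisibility of $p\pm1$ directly and so avoids that verification. None of these points is a fatal gap, but the structural claim about $\bar G$ and the realizability of the residues are the two places where your sketch defers work that a complete proof must carry out.
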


\section{A number theoretic lemma}

To prove Theorem~\ref{thm:main}, we need to establish a number theoretic lemma.

\begin{lemma}\label{lem:number}
 Let $a,b$ be integers such that $a,b>1$ and $\gcd(a,b)=1$.
 Then there exist infinitely many primes $p$ such that $a$ is a Hall divisor of $p-1$, and $b$ is a Hall divisor of $p+1$, namely,
 \[
 a\div(p-1)\,,\; b\div(p+1)\quad\text{ and } \quad\gcd\bigg(a\,,\,{p-1\over a}\bigg)=\gcd\bigg(b\,,\,{p+1\over b}\bigg)=1.
\]
\end{lemma}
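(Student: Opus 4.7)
The plan is to reduce the lemma to Dirichlet's theorem on primes in arithmetic progressions via a prime-by-prime analysis. First, writing $a = \prod_{q} q^{a_q}$ and $b = \prod_{r} r^{b_r}$ with $q,r$ ranging over the prime divisors of $a,b$ respectively, I would translate the condition that $a$ is a Hall divisor of $p-1$ into the local requirements $v_q(p-1) = a_q$ for each prime $q \mid a$ (where $v_q$ denotes the $q$-adic valuation); the condition on $b$ becomes $v_r(p+1) = b_r$ for each $r \mid b$. The hypothesis $\gcd(a,b) = 1$ ensures that the sets of primes $\{q\}$ and $\{r\}$ are disjoint, so these local conditions never conflict.

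Next, for each prime $q \mid a$ with $k := a_q \geq 1$, the $q$ residue classes modulo $q^{k+1}$ satisfying $p \equiv 1 \pmod{q^k}$ split into one bad class (where $p \equiv 1 \pmod{q^{k+1}}$) and $q-1$ good classes; each good class is automatically coprime to $q$ since $p \equiv 1 \pmod q$. The analogous count with $p \equiv -1$ handles each $r \mid b$. By the Chinese Remainder Theorem these local choices combine into
\[
\prod_{q \mid a}(q-1) \cdot \prod_{r \mid b}(r-1) \ge 1
\]
residue classes modulo
\[
M := \prod_{q \mid a} q^{a_q + 1} \cdot \prod_{r \mid b} r^{b_r + 1},
\]
and every one of these residue classes is coprime to $M$ by construction.

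Dirichlet's theorem then produces infinitely many primes $p$ in each such arithmetic progression, and any such $p$ satisfies both Hall-divisibility conditions simultaneously. The argument has no genuine obstacle; the only subtlety is the edge case $q = 2$ or $r = 2$, where $q - 1 = 1$ leaves just a single valid class modulo $2^{k+1}$, but that class is still odd and hence coprime to $2$, so Dirichlet's theorem continues to apply without issue.
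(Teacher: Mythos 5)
Your proof is correct, and it takes a genuinely different route from the paper's. The paper uses the Chinese Remainder Theorem to construct a single residue $\xi$ modulo $(ab)^2$ with $\xi\equiv 1+a\pmod{a^2}$ and $\xi\equiv -1+b\pmod{b^2}$, and then invokes an auxiliary density lemma (Dirichlet density together with an inclusion--exclusion over the prime divisors of $(ab)^2$) to produce infinitely many primes $p\equiv\xi\pmod{(ab)^2}$ satisfying the extra coprimality condition $\gcd\bigl((ab)^2,\tfrac{p-\xi}{(ab)^2}\bigr)=1$; it then verifies the Hall conditions by an explicit computation of $\tfrac{p-1}{a}$ and $\tfrac{p+1}{b}$. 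You instead translate ``$a$ is a Hall divisor of $p-1$'' into the exact valuation requirement $v_q(p-1)=v_q(a)$ for each prime $q\mid a$ (and similarly for $b$ and $p+1$), observe that each such requirement is a pure congruence condition modulo $q^{v_q(a)+1}$ selecting $q-1$ unit classes (one class when $q=2$, which you correctly flag), and glue by CRT into unit classes modulo $M=\prod_{q\mid a}q^{a_q+1}\prod_{r\mid b}r^{b_r+1}$. The payoff is that you need only the qualitative form of Dirichlet's theorem on a single arithmetic progression --- no density computation or inclusion--exclusion at all --- so your argument is more elementary and self-contained than the paper's; as a bonus, it also counts how many admissible residue classes modulo $M$ there are. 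The paper's formulation of its density lemma is more general-purpose, but for this particular application your direct encoding of the exact valuations makes the extra machinery unnecessary.
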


Our proof of this lemma relies on the famous Dirichlet density theorem.

If $A$ is a set of prime numbers, let $\delta(A)$ (when it exists) denote its {\it (Dirichlet) density} as defined in \cite[Section\;VI.4]{SerreGTM7}. The following facts can be easily verified using the definition:

\begin{itemize}
  \item If $A$ is a finite set of primes, then $\delta(A)=0$.

  \item If $A_1,\,A_2$ are two sets of primes such that $\delta(A_1),\delta(A_2)$ and $\delta(A_1\cap A_2)$ all exist, then $\delta(A_1\cup A_2)$ exists and
\[
\delta(A_1\cup A_2)=\delta(A_1)+\delta(A_2)-\delta(A_1\cap A_2)\,.
\] This fact easily generalizes to an inclusion-exclusion principle for the density of a finite union of sets of primes.
\end{itemize}

For any positive integer $n$ and any integer $\xi$ such that $\gcd(\xi,n)=1$, define
\[
\mathcal{P}_\xi[n]:=\{ p \text{ prime}\,|\, p\equiv \xi\; (\mod{n})\}.
\]By Dirichlet's density theorem (\cite[Section\;VI.4, Thm.\;2]{SerreGTM7}), we have $\delta(\mathcal{P}_\xi[n])=\frac{1}{\varphi(n)}$, where $\varphi$ denotes Euler's totient function.

\begin{lemma}\label{lem:density}
  Let $m\ge 2$ be a positive integer and let $\xi$ be an integer such that $\gcd(\xi,m)=1$. 
  Then there exist infinitely many primes $p$ such that
  $p\equiv \xi\;(\mod{m})$ and $\gcd\big(m,\,\frac{p-\xi}{m}\big)=1$.
\end{lemma}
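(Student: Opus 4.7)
The plan is to reinterpret the condition $\gcd(m,(p-\xi)/m)=1$ in terms of congruences modulo $qm$ for each prime $q$ dividing $m$, and then apply Dirichlet's density theorem combined with inclusion-exclusion to show that the set of primes satisfying both requirements has positive density (hence is infinite).

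Let $q_1,\dots,q_r$ be the distinct prime divisors of $m$. First I would observe that for any prime $p\equiv\xi\,(\mathrm{mod}\;m)$ and any $q_i$, the divisibility $q_i\mid (p-\xi)/m$ is equivalent to $p\equiv\xi\,(\mathrm{mod}\;q_im)$. Consequently the set of primes we are looking for is exactly
\[
\mathcal{P}_\xi[m]\;\setminus\;\bigcup_{i=1}^{r}\mathcal{P}_\xi[q_im].
\]
Note that $\gcd(\xi,q_im)=1$ follows from $\gcd(\xi,m)=1$ (since $q_i\mid m$), so the sets $\mathcal{P}_\xi[q_im]$ are well-defined, and each $\mathcal{P}_\xi[q_im]$ is a subset of $\mathcal{P}_\xi[m]$.

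Next I would use the inclusion-exclusion formula for Dirichlet densities recalled in the excerpt. For any nonempty $I\subseteq\{1,\dots,r\}$, the intersection $\bigcap_{i\in I}\mathcal{P}_\xi[q_im]$ coincides with $\mathcal{P}_\xi[m\prod_{i\in I}q_i]$, because the corresponding system of congruences is equivalent to the single congruence modulo the least common multiple $m\prod_{i\in I}q_i$. Since each $q_i$ already divides $m$, the totient satisfies $\varphi(m\prod_{i\in I}q_i)=\varphi(m)\prod_{i\in I}q_i$, hence by Dirichlet's theorem
\[
\delta\bigg(\bigcap_{i\in I}\mathcal{P}_\xi[q_im]\bigg)=\frac{1}{\varphi(m)\prod_{i\in I}q_i}.
\]

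Combining these via inclusion-exclusion and subtracting from $\delta(\mathcal{P}_\xi[m])=1/\varphi(m)$, I would obtain
\[
\delta\bigg(\mathcal{P}_\xi[m]\setminus\bigcup_{i=1}^{r}\mathcal{P}_\xi[q_im]\bigg)=\frac{1}{\varphi(m)}\prod_{i=1}^{r}\bigg(1-\frac{1}{q_i}\bigg),
\]
which is strictly positive. Since a finite set of primes has density zero, the set of primes satisfying both conditions must be infinite, completing the proof.

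There is no real obstacle here; the only thing that needs a moment of care is the identification $\bigcap_{i\in I}\mathcal{P}_\xi[q_im]=\mathcal{P}_\xi[m\prod_{i\in I}q_i]$ and the totient computation, both of which use crucially that every $q_i$ already divides $m$. The inclusion-exclusion step is exactly the generalization mentioned in the bullet points preceding the lemma, so it is available.
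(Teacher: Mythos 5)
Your proposal is correct and follows essentially the same route as the paper: the same identification of the desired set as $\mathcal{P}_\xi[m]\setminus\bigcup_i\mathcal{P}_\xi[q_im]$, the same inclusion--exclusion over subsets of prime divisors of $m$ via Dirichlet's density theorem, and the same totient computation yielding the positive density $\frac{1}{\varphi(m)}\prod_i\bigl(1-\frac{1}{q_i}\bigr)$. No issues to report.
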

\begin{proof}
Let $p_1,\cdots, p_r$ be all the distinct prime divisors of $m$.  It is sufficient to prove that the set $\mathcal{P}_\xi[m]\setminus\big(\bigcup^r_{i=1}\mathcal{P}_\xi[mp_i]\big)$ has a positive density. This is equivalent to the inequality
\[
\delta\bigg(\bigcup^r_{i=1}\mathcal{P}_\xi[mp_i]\bigg)<\delta(\mathcal{P}_\xi[m])=\frac{1}{\varphi(m)}.
\]
By the inclusion-exclusion principle and Dirichlet's density theorem, we have
\[
\begin{split}
  \delta\bigg(\bigcup^r_{i=1}\mathcal{P}_\xi[mp_i]\bigg)
  &=\sum_{\emptyset\neq I\subseteq\{1,\cdots, r\}}(-1)^{1+|I|}\delta\bigg(\bigcap_{i\in I}\mathcal{P}_\xi[mp_i]\bigg)\\
  &=\sum_{\emptyset\neq I\subseteq\{1,\cdots, r\}}(-1)^{1+|I|}\delta\bigg(\mathcal{P}_\xi\bigg[m\prod_{i\in I}p_i\bigg]\bigg)\\
  &=\sum_{\emptyset\neq I\subseteq\{1,\cdots, r\}}(-1)^{1+|I|}\frac{1}{\varphi(m\prod_{i\in I}p_i)},
\end{split}
\]
where $I$ runs over nonempty subsets of the index set $\{1,\cdots, r\}$. Since
\[
\frac{\varphi(m)}{\varphi(m\prod_{i\in I}p_i)}=\prod_{i\in I}\frac{1}{p_i}\,,
\]
we only need to prove
\[
\sum_{\emptyset\neq I\subseteq\{1,\cdots, r\}}(-1)^{1+|I|}\prod_{i\in I}\frac{1}{p_i}<1\,,
\]
or equivalently,
\[
\sum_{I\subseteq\{1,\cdots, r\}}(-1)^{|I|}\prod_{i\in I}\frac{1}{p_i}>0\,.
\]
The left-hand side of this last inequality is equal to the product $\prod^r_{i=1}\big(1-\frac{1}{p_i}\big)$.
So the desired inequality is clear. The lemma is thus proved.\qed
\end{proof}

\medskip

\noindent {\it Proof of Lemma\;$\ref{lem:number}$.} \ 
By the Chinese remainder theorem, there is an integer $a$ such that
\[
\xi\equiv 1+a\;(\mod{a^2})\quad\text{ and }\quad \xi\equiv -1+b\;(\mod{b^2})\,.
\]
Thus, $\xi=1+a+y_1a^2=-1+b+y_2b^2$ for some $y_1,y_2\in\mathbb{Z}$.

Put $m=(ab)^2$. It is clear that $\gcd(\xi,m)=1$. 
So, by Lemma\;\ref{lem:density}, there are infinitely many primes $p$ such that
$x_p:=\frac{p-\xi}{m}\in \mathbb{Z}$ and $\gcd(m,x_p)=1$. For any such $p$, we have
\[
\frac{p-1}{a}=1+y_1a+x_pab^2\quad\text{and}\quad \frac{p+1}{b}=1+y_2b+x_pba^2.
\]
So it is easily seen that $p$ has the desired properties. \qed

\section{Proof of Theorem~\ref{thm:main}}

The necessity in Theorem~\ref{thm:main} will be established by analysing Hall subgroups of the simple groups $\PSL_2(p)$.
Subgroups of $\PSL_2(p)$ with $p$ prime are explicitly known, refer to \cite{Low-dim-book}.

\begin{lemma}\label{lem:PSL(2,p)}
Let $T=\PSL_2(p)$, where $p\ge5$ is a prime, and let $H<T$.
Then either
\begin{itemize}
\item[(1)] $H\le \C_p{:}\C_{p-1\over2}$, $\D_{p-1}$, or $\D_{p+1}$, or
\item[(2)] $H=\A_4$, $\S_4$, or $\A_5$.
\end{itemize}
\end{lemma}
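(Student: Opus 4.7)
The plan is to follow Dickson's classical analysis, splitting into two cases according to whether $p$ divides $|H|$. Note that $|T|=p(p-1)(p+1)/2$ and that every nontrivial element of $T$ has order $p$, a divisor of $(p-1)/2$, or a divisor of $(p+1)/2$; these three families correspond to unipotent elements, a split maximal torus, and a nonsplit maximal torus, with normalisers in $T$ equal to $\C_p{:}\C_{p-1\over 2}$, $\D_{p-1}$, and $\D_{p+1}$ respectively. These three subgroups, together with the polyhedral groups $\A_4$, $\S_4$, $\A_5$, will exhaust the possibilities for a proper subgroup $H$.

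If $p\mid|H|$, then $H$ contains a Sylow $p$-subgroup $P\cong\C_p$. Using the action of $T$ on the projective line $\PG(1,p)$, the subgroup $P$ fixes a unique point, and any two distinct Sylow $p$-subgroups together generate $T$. I would leverage this to show that $H$ has a unique Sylow $p$-subgroup, forcing $H\le N_T(P)=\C_p{:}\C_{p-1\over 2}$, which is case~(1).

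If $\gcd(|H|,p)=1$, then every nontrivial element of $H$ lies in a unique cyclic maximal torus, which in turn is contained in exactly one dihedral maximal subgroup $\D_{p\pm 1}$ of $T$. I would list the maximal cyclic subgroups $C_1,\dots,C_s$ of $H$, whose orders $n_i$ each divide $(p-1)/2$ or $(p+1)/2$, and count ordered pairs $(x,i)$ with $x\in C_i\setminus\{1\}$. Combined with the observation that $[N_H(C_i):C_i]\in\{1,2\}$, this yields a Jordan--Dickson style Diophantine identity whose only positive-integer solutions either (i) force $s\le 1$, so $H$ is cyclic and lies in some $\D_{p\pm 1}$, (ii) force $H$ to be dihedral, again inside some $\D_{p\pm 1}$, or (iii) pin down $(n_1,n_2,n_3)\in\{(2,3,3),(2,3,4),(2,3,5)\}$, corresponding respectively to $\A_4$, $\S_4$, and $\A_5$.

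The main obstacle is converting the numerical profile $(n_1,\dots,n_s)$ produced in case~(iii) into a genuine isomorphism type and a genuine embedding in $T$. One must rule out abstract groups sharing the same cyclic-subgroup profile but of different structure, which I would do by invoking the classical classification of finite subgroups of $\PGL_2$ over a field of characteristic coprime to $|H|$ (essentially the classification of finite rotation groups in three-dimensional Euclidean space). Once the isomorphism type is pinned down, it remains to check realisability in $\PSL_2(p)$, which is immediate for the dihedral and Borel cases and follows by explicit construction (or a character-theoretic count) for the three polyhedral cases.
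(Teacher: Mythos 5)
Your outline is essentially correct, but it bears no relation to what the paper does: the paper gives no proof of this lemma at all. It is Dickson's classical description of the subgroups of $\PSL_2(p)$, quoted with a pointer to \cite{Low-dim-book}. What you have sketched is, in effect, the standard proof of that cited theorem. Your first case is clean: two distinct Sylow $p$-subgroups of $T$ may be conjugated to the upper and lower unipotent subgroups (the point stabiliser $\C_p{:}\C_{(p-1)/2}$ is transitive on the remaining points of $\PG(1,p)$), and these generate $\SL_2(p)$ modulo the centre; hence a proper subgroup of order divisible by $p$ has a normal Sylow $p$-subgroup and lies in the Borel. Your second case is the Dickson/rotation-group count, and the ingredients you list (triviality of pairwise intersections of maximal tori, normalisers dihedral, so $[N_H(C_i):C_i]\le 2$) are exactly what is needed to derive the identity $1-\frac{1}{|H|}=\sum_i\frac{1}{e_i}\left(1-\frac{1}{n_i}\right)$ with $e_i\in\{1,2\}$ and to solve it. The trade-off is the usual one: the paper buys brevity by citing a known classification; you buy self-containedness at the cost of a genuinely long verification.

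The one soft spot is the final step of your coprime case. After running the count you propose to invoke ``the classification of finite subgroups of $\PGL_2$ over a field of coprime characteristic'' to convert the numerical profile into an isomorphism type --- but that classification \emph{is} the statement your count is designed to prove, so as written the argument is either redundant (just cite the classification and skip the count) or dangerously close to circular. The standard way to close this is internal to the count: once the profile is $(2,3,3)$, $(2,3,4)$ or $(2,3,5)$, the orders $|H|=12,24,60$ are forced, the conjugacy data pins down generators and relations, and one identifies $H$ with $\A_4$, $\S_4$, $\A_5$ directly, with no appeal to an outside theorem. Also note that your closing ``realisability'' check is not needed for the lemma as stated, which only asserts that every proper subgroup appears on the list, not that every listed group occurs in $\PSL_2(p)$.
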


Lemmas~\ref{lem:number} and \ref{lem:PSL(2,p)} enable us to obtain the candidates for Hall numbers. 
As usual, for an integer $n$ and a prime divisor $p$ of $n$, let $n_p$ be the $p$ part of $n$, namely, $n_p$ is the highest power of $p$ in $n$ so that $\gcd\left(n_p,{n\over n_p}\right)=1$.
Then denote ${n\over n_p}$ by $n_{p'}$.


\begin{lemma}\label{lem:candidates}
Let $m=ab$ be a positive integer such that $a,b>2$ and $\gcd(a,b)=1$.
Assume that $m\notin\{12,24,60\}$.
Then there exists a group $G=\PSL_2(p)$ for some prime $p$ such that $m$ is a Hall divisor of $|G|$, and that $G$ does not have a Hall subgroup of order $m$.

In particular, any  Hall number $m$ satisfies one of the three conditions in Theorem\;$\ref{thm:main}$.
\end{lemma}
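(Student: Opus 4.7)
The plan is to produce, for a suitable decomposition $m=ab$, a prime $p$ so that $G=\PSL_2(p)$ has $m$ as a Hall divisor of $|G|$ yet admits no subgroup of order $m$. The two engines are Lemma~\ref{lem:number} (which constructs $p$) and Lemma~\ref{lem:PSL(2,p)} (which rules out the subgroup). The main hypothesis one really uses is $m\not\equiv 2\pmod 4$: if $m\equiv 2\pmod 4$ then $m/2$ is odd and condition (2) of Theorem~\ref{thm:main} already applies, while if $m$ is a prime power, condition (1) applies, so in either case no counterexample is needed.

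Assume then $m$ is not a prime power and $m\not\equiv 2\pmod 4$. I would choose the decomposition $m=ab$ so that $a$ carries the full $2$-part of $m$ (so either $a$ is odd or $\val_2(a)\ge 2$) and $b$ is odd. Invoking Lemma~\ref{lem:number}, I obtain infinitely many primes $p$ with $a\div (p-1)$ and $b\div (p+1)$, and fix one with $p\ge 5$ and $p\nmid m$. A prime-by-prime check then shows $m$ is a Hall divisor of $|G|=p(p-1)(p+1)/2$: for each odd prime $q\mid m$, using $\gcd(p-1,p+1)\mid 2$, the Hall condition on $a$ or $b$ pins $\val_q(|G|)=\val_q(m)$; at $q=2$ (if $a$ is even), $\val_2(a)\ge 2$ together with $a\div (p-1)$ forces $\val_2(p-1)=\val_2(a)$, hence $p\equiv 1\pmod 4$, $\val_2(p+1)=1$, and $\val_2(|G|)=\val_2(a)=\val_2(m)$.

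The main step is to show $G$ has no subgroup of order $m$. By Lemma~\ref{lem:PSL(2,p)}, any $H<G$ either lies in a Borel subgroup $\C_p{:}\C_{(p-1)/2}$ or in a dihedral subgroup $\D_{p\pm 1}$, or equals $\A_4$, $\S_4$, or $\A_5$. In the Borel case, $\gcd(|H|,p)=1$ puts $H$ inside the cyclic complement $\C_{(p-1)/2}$, so $|H|\mid p-1$, and in particular $b\mid p-1$; combined with $b\mid p+1$ and $\gcd(p-1,p+1)\mid 2$ this contradicts $b>2$. The $\D_{p-1}$ case gives $m\mid p-1$ and yields the same contradiction via $b$, and $\D_{p+1}$ gives $m\mid p+1$, hence $a\mid p+1$, which combined with $a\mid p-1$ contradicts $a>2$. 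Finally, $|H|=m$ with $H$ isomorphic to $\A_4$, $\S_4$, or $\A_5$ forces $m\in\{12,24,60\}$, excluded by hypothesis.

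The main obstacle is the parity bookkeeping at the prime $2$: the decomposition must load the full $2$-part of $m$ onto $a$ so that $\val_2(p-1)$ is pinned down exactly, which is possible only because $\val_2(m)\ne 1$. This is also what explains why the case $m\equiv 2\pmod 4$ is left to Hall's theorem for solvable groups rather than treated by this construction. The ``In particular'' clause then follows since every $m$ failing all three conditions of Theorem~\ref{thm:main} admits a decomposition $m=ab$ with $a,b>2$ coprime satisfying our hypothesis.
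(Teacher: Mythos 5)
Your proof is correct and runs on the same two engines as the paper's: Lemma~\ref{lem:number} to manufacture the prime $p$, and Lemma~\ref{lem:PSL(2,p)} to rule out a Hall subgroup, with an identical case analysis of the Borel, dihedral, and exceptional subgroups. The genuine difference is your bookkeeping at the prime $2$, and it is an improvement rather than a detour. The paper takes the decomposition $m=ab$ as given and asserts that $m$ is a Hall divisor of $|\PSL_2(p)|$ directly from $\gcd(p-1,p+1)=2$; this step fails when the even one of $a,b$ has $2$-adic valuation exactly $1$ (then $p\equiv 3\pmod 4$ and $|\PSL_2(p)|_2=(p+1)_2>2=m_2$). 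Indeed, for $m=30=6\cdot5$ the lemma as literally stated is false: $30$ satisfies condition (2) of Theorem~\ref{thm:main} and is a Hall number, and since $8$ divides $p^2-1$ for every odd $p$, the number $30$ is never a Hall divisor of $|\PSL_2(p)|$. Your added hypothesis $m\not\equiv 2\pmod 4$, together with loading the full $2$-part of $m$ onto $a$ so that $\val_2(a)\ne 1$ (forcing $p\equiv 1\pmod 4$ and $\val_2(p+1)=1$), is exactly what makes the Hall-divisor claim true, and your remark that the excluded case $m\equiv 2\pmod 4$ is already covered by condition (2) keeps the ``in particular'' conclusion, which is all the paper actually uses, fully intact. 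One cosmetic quibble: in the Borel case you need not place $H$ inside the cyclic complement; it suffices that $|H|$ divides $p(p-1)/2$ and is coprime to $p$, hence divides $(p-1)/2$, which is the only fact you use.
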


\begin{proof}
By Lemma~\ref{lem:number}, there exists a prime $p$ such that $a$ is a Hall divisor of $p-1$, and $b$ is a Hall divisor of $p+1$.
Let $T=\PSL_2(p)$.
Then $|T|={1\over2}p(p-1)(p+1)$.
Noticing that $\gcd\left(p-1,p+1\right)=2$, we conclude that $m$ divides $(p-1)(p+1)/2$, and so $m$ is a Hall divisor of the order $|T|$.

Suppose that $H$ is a Hall subgroup of $T$  of order $m$.
If $H\le\C_p{:}\C_{p-1\over2}$, then $|H|=m=ab$, where $a\div(p-1)_{2'}$ and $b=2$, which is a contradiction.
If $H\le \D_{p-1}$, then $m=ab$ is such that $a\div(p-1)_{2'}$ and $b=2$, which is a contradiction.
If $H\le \D_{p+1}$, then $m=ab$ is such that $a=2$ and $b\div(p+1)_{2'}$, which is a contradiction.

Finally, if $H=\A_4$, $\S_4$, or $\A_5$, as in part~(2) of Lemma~\ref{lem:number}, then $m=|H|\in\{12,24,60\}$, which is a contradiction.
\qed\end{proof}

In the remaining part of the paper, we need to prove that the numbers satisfying one of the three conditions in Theorem\;$\ref{thm:main}$ are indeed Hall numbers.
As pointed out before, each power of a prime is a Hall number by Sylow's theorem;
an even number $m$ with $m/2$ odd is a Hall number by Hall's theorem for solvable groups.
Thus, to complete the proof of Theorem\;$\ref{thm:main}$, we only need to prove that each of the three numbers 12, 24 and 60 is a Hall number.

Recall that, for a finite group $G$, the {\it solvable radical} of $G$ is the largest solvable normal subgroup of $G$.
Obviously, the solvable radical of a finite group is a characteristic subgroup.
A finite group $G$ is said to be {\it almost simple} if $T\lhd G\le\Aut(T)$ for some nonabelian simple group $T$.
In this case, $T$ is called the {\it socle} of $G$, denoted by $\soc(G)$.

\begin{lemma}\label{lem:reduction}
Let $m\in\{12,24,60\}$, and let $G$ be a nonsolvable group such that $m$ is a Hall divisor of $|G|$.
Let $R$ be the solvable radical of $G$.
Then $G/R$ is an almost simple group with socle $T=\soc(G/R)$ such that a Sylow $2$-subgroup of $T$ is abelian or a dihedral group $\D_8$.
\end{lemma}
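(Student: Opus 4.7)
The plan is to exploit that $2\mid m$ combined with the fact that $m$ is a Hall divisor of $|G|$ to pin down the $2$-part of $|G|$, which in turn severely restricts both the number of nonabelian composition factors of $G/R$ and the $2$-local structure of its socle.

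First I set $\ov{G}:=G/R$ and observe that, because $R$ is the full solvable radical of $G$, the quotient $\ov{G}$ has trivial solvable radical. Consequently every minimal normal subgroup of $\ov{G}$ is nonabelian and $\soc(\ov{G})=T_1\times\cdots\times T_r$ is a direct product of nonabelian simple groups. At this stage I would invoke the elementary fact that $4\mid|T_i|$ for every nonabelian simple group $T_i$ (otherwise $|T_i|=2n$ with $n$ odd, and the sign character on the regular representation produces a normal subgroup of index $2$). Hence $4^r$ divides $|\soc(\ov{G})|$ and in particular divides $|G|$.

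Next I translate the Hall hypothesis: since $m$ is a Hall divisor of $|G|$ and $2\mid m$, the cofactor $|G|/m$ is odd, so the $2$-part of $|G|$ equals the $2$-part of $m$, which is $4$ when $m\in\{12,60\}$ and $8$ when $m=24$. In either case $4^r\le 8$, forcing $r=1$, so $T:=\soc(\ov{G})$ is a single nonabelian simple group. The centralizer $C_{\ov{G}}(T)$ is normal in $\ov{G}$ and meets $T$ in $Z(T)=1$; were it nontrivial, it would contain a further minimal normal subgroup of $\ov{G}$ disjoint from $T$, contradicting $\soc(\ov{G})=T$. Hence $C_{\ov{G}}(T)=1$, conjugation embeds $\ov{G}$ into $\Aut(T)$, and $\ov{G}$ is almost simple with socle $T$.

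Finally I analyse a Sylow $2$-subgroup $P$ of $T$. Its order divides the $2$-part of $|G|$, so $|P|\in\{4,8\}$. If $|P|=4$ then $P$ is abelian. If $|P|=8$ then $P$ is one of $\C_8$, $\C_4\times\C_2$, $\C_2^3$, $\D_8$, or $\Q_8$. The cyclic case $\C_8$ is excluded by Burnside's normal $p$-complement theorem applied to the smallest prime $p=2$, since $T$ would then be $2$-nilpotent, contradicting simplicity. The quaternion case $\Q_8$ is excluded by the Brauer--Suzuki theorem, which would force a nontrivial central element in $T/O(T)$ and hence in $T$ itself (as $O(T)=1$), contradicting that $T$ is nonabelian simple. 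The three remaining possibilities are either abelian or equal to $\D_8$, as claimed. The entire argument is essentially structural bookkeeping, and the only ``obstacle'' is selecting the right combination of classical theorems ($4\mid|T|$ for nonabelian simple $T$, Burnside transfer, and Brauer--Suzuki) to strip away the forbidden Sylow $2$-structures.
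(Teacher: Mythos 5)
Your proof is correct and follows essentially the same route as the paper's: bound $|G|_2$ by the Hall divisor hypothesis, use $4\mid|T_i|$ for nonabelian simple groups to force a unique, simple minimal normal subgroup (hence $G/R$ almost simple), and then rule out $\Q_8$ by Brauer--Suzuki so that a Sylow $2$-subgroup of the socle is abelian or $\D_8$. You are merely more explicit than the paper in a few spots (the centralizer argument for the embedding into $\Aut(T)$, and the Burnside exclusion of $\C_8$, which is not actually needed since $\C_8$ is abelian).
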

\begin{proof}
Since $G$ is nonsolvable, a Sylow 2-subgroup of $G$ is of order at least 4, and hence by the assumption, a Sylow 2-subgroup of $G$ is of order 4 or 8.
Let $R$ be the solvable radical of $G$.
Then $G/R$ is nonsolvable, and each minimal normal subgroup of $G/R$ is nonabelian.
Since $|G|_2=4$ or 8, we have that $|G/R|_2=4$ or 8.
Suppose that $S,T$ are two distinct minimal normal subgroups of $G/R$.
Then $\l S,T\r=S\times T\lhd G/R$, and $|S\times T|_2\ge 4^2=16$, which is a contradiction.
Thus $G/R$ has a unique minimal normal subgroup $T$.
Arguing similarly shows that $T$ is a simple group, and thus $G/R$ is an almost simple group.
So $T=\soc(G/R)$ is a nonabelian simple group.

Let $T_2$ be a Sylow 2-subgroup of $T$.
Since $|T_2|=4$ or 8, either $T_2$ is abelian, or $T_2=\D_8$ or $\Q_8$.
If $T_2=\Q_8$, then $T$ is not a nonabelian simple group, which is not possible.
Thus $T_2$ is abelian or $\D_8$.
\qed
\end{proof}

We thus need some results regarding nonabelian simple groups with Sylow 2-subgroups being abelian or $\D_8$.
The following list of simple groups with abelian Sylow 2-subgroups can be found in Theorem~4.126 of \cite{Gorenstein}.

\begin{lemma}\label{lem:simple-gps}
Let $T$ be a nonabelian simple group which has abelian Sylow $2$-subgroups.
Then $T=\PSL_2(q)$ with $q\equiv 3,5$ $(\mod 8)$, or $\PSL_2(2^f)$, or $\J_1$, or $\Ree(3^f)$ with $f>1$.
\end{lemma}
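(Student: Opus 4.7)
The plan is to invoke the classical classification of nonabelian finite simple groups with abelian Sylow $2$-subgroup, due ultimately to Walter, which is recorded in precisely the form we need as Theorem~4.126 of \cite{Gorenstein}. No new argument is required, but I outline the structure of the classical proof for orientation.

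The historical strategy proceeds in three main stages. First, one exploits the abelian Sylow hypothesis together with fusion analysis: Glauberman's $Z^*$-theorem forces each involution $t\in T$ to be conjugate within $T$ to some distinct element of its own Sylow $2$-subgroup $T_2$ (i.e.\ no involution is isolated), and then Burnside's transfer theorem, combined with a careful study of the action of $N_T(T_2)/C_T(T_2)$ on $T_2$, pins down the isomorphism type of $T_2$ and the shape of its normalizer up to a handful of possibilities. Second, one fixes an involution $t\in T$ and analyses its centralizer $C_T(t)$: the Feit-Thompson odd-order theorem, signalizer functor methods, and Bender's classification of groups with a strongly embedded subgroup cut the structure of $C_T(t)$ down to a short list. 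Third, Brauer's exceptional character theory is used to reconstruct $T$ itself from the data of its involution centralizer, yielding exactly the four families $\PSL_2(q)$ with $q\equiv 3,5\pmod 8$, $\PSL_2(2^f)$, $\J_1$, and $\Ree(3^f)$ with $f>1$.

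The main obstacle to a self-contained proof is the sheer depth of each of these three stages, each of which occupies several long papers of foundational finite simple group theory (Brauer-Suzuki, Gorenstein-Walter, Walter, Bender, Glauberman). Since the present note needs this classification only as an external input to analyze the exceptional Hall numbers $12$, $24$, $60$, the sensible course is to cite the theorem directly from \cite{Gorenstein} and move on.
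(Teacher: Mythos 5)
Your proposal matches the paper exactly: the paper gives no proof of this lemma, simply citing Theorem~4.126 of \cite{Gorenstein} for the classification of nonabelian simple groups with abelian Sylow $2$-subgroups. Your historical sketch of the Walter--Bender--Glauberman argument is accurate but, as you note, the right move here is just the citation.
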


Finite simple groups with dihedral Sylow 2-subgroups were classified by Gorenstein in \cite{Gorenstein-1964}, from which we obtain the following lemma.

\begin{lemma}\label{lem:simple-gps-2}
Let $T$ be a nonabelian simple group which has Sylow $2$-subgroups isomorphic to $\D_8$.
Then $T=\PSL_2(q)$ with $q\equiv 7,9$ $(\mod 16)$.
\end{lemma}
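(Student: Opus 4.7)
The plan is to invoke the Gorenstein--Walter classification of finite simple groups with dihedral Sylow $2$-subgroups, cited as \cite{Gorenstein-1964}. That theorem asserts that every finite nonabelian simple group whose Sylow $2$-subgroups are dihedral is isomorphic either to $\PSL_2(q)$ for some odd prime power $q\ge 5$, or to the alternating group $\A_7$. I would open the proof simply by quoting this classification, so that the task reduces to selecting from the list those $T$ whose Sylow $2$-subgroups have order exactly $8$.

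For odd $q$ we have $|\PSL_2(q)|=q(q-1)(q+1)/2$, and hence $|\PSL_2(q)|_2=(q^2-1)_2/2$. The condition $|\PSL_2(q)|_2=8$ therefore translates to $(q^2-1)_2=16$. The elementary $2$-adic case analysis then reads: if $q\equiv 1$ $(\mod 4)$ then $(q+1)_2=2$, so one needs $(q-1)_2=8$, which is equivalent to $q\equiv 9$ $(\mod 16)$; if $q\equiv 3$ $(\mod 4)$ then $(q-1)_2=2$ and the same requirement becomes $(q+1)_2=8$, equivalent to $q\equiv 7$ $(\mod 16)$. Combined, these give the congruence $q\equiv 7,9$ $(\mod 16)$ asserted in the lemma.

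It remains to address the sporadic possibility $T=\A_7$. Since $\A_7\supset\A_6\cong\PSL_2(9)$ and both groups have $|T|_2=8$, $\A_7$ also has Sylow $2$-subgroup isomorphic to $\D_8$, so in a strict reading it should be listed. In the context in which this lemma is actually applied, however, $|\A_7|=2^3\cdot 3^2\cdot 5\cdot 7$ shares a factor of $6$ with each of $12$, $24$, $60$, so none of these is a Hall divisor of $|\A_7|$ and the $\A_7$ case contributes nothing to the subsequent analysis; this is presumably why it is tacitly omitted from the statement. The only non-routine ingredient is the Gorenstein--Walter classification itself, which we treat as a black box; the remaining $2$-adic bookkeeping is the only (very mild) obstacle.
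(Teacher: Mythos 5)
Your proposal is correct and follows exactly the route the paper intends: the paper offers no explicit proof of this lemma, simply citing the Gorenstein--Walter classification and leaving the $2$-adic computation (which you carry out correctly) implicit. Your observation about $\A_7$ is a genuine and correct catch rather than a defect of your argument: since $\A_7\supset\A_6\cong\PSL_2(9)$ and $|\A_7|_2=8$, the group $\A_7$ does have Sylow $2$-subgroups isomorphic to $\D_8$, so the lemma as stated in the paper is strictly speaking incomplete. As you note, the omission is harmless downstream --- in Lemma~\ref{lem:24} the case $|T|_2=8$ already excludes any $T$ of order divisible by $9$, which rules out $\A_7$ for the same reason it rules out $\PSL_2(8)$ and $\Ree(3^f)$ --- but a careful write-up should either add $\A_7$ to the conclusion of Lemma~\ref{lem:simple-gps-2} or discharge it explicitly where the lemma is applied.
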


Now we are ready to complete the proof of Theorem\;$\ref{thm:main}$ by the next three lemmas.

\begin{lemma}\label{lem:12}
Let $G$ be a nonsolvable group such that $12$ is a Hall divisor of $|G|$.
Then, letting $R$ be the solvable radical of $G$, the following hold:
\begin{itemize}
\item[$(1)$] $G/R=\PSL_2(q).\C_e\le \PSigmaL_2(q)$, where $q^2\equiv 25,121$ $(\mod 144)$, $\gcd(e,6)=1$ and

\item[$(2)$] $G$ has a Hall subgroup $H\cong\A_4$.
\end{itemize}
In particular, $12$ is a Hall number.
\end{lemma}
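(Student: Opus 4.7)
The plan is to apply the reduction of Lemma~\ref{lem:reduction} to put $G/R$ into almost-simple form, then narrow the possible socle $T$ by matching against Lemmas~\ref{lem:simple-gps}--\ref{lem:simple-gps-2}, and finally produce the $\A_4$ Hall subgroup via Dickson's list of subgroups of $\PSL_2(q)$ together with Schur--Zassenhaus.

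Since $12$ is a Hall divisor of $|G|$ we have $|G|_2 = 4$ and $|G|_3 = 3$; writing $T = \soc(G/R)$, the standard fact that a nonabelian simple group has $|T|_2 \ge 4$, combined with $|T|_2 \le |G|_2 = 4$, forces $|T|_2 = 4$. So the Sylow $2$-subgroup of $T$ is abelian of order $4$, ruling out Lemma~\ref{lem:simple-gps-2} and placing us in Lemma~\ref{lem:simple-gps}. Of the four families there I discard $\J_1$ (whose Sylow $2$-subgroup has order $8$) and $\Ree(3^f)$ with $f > 1$ (whose $3$-part $3^{3f}$ exceeds $|G|_3 = 3$), while in $\PSL_2(2^f)$ the requirement $|T|_2 = 2^f = 4$ forces $f = 2$, giving $T = \PSL_2(4) = \PSL_2(5)$, absorbed into the first family. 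Hence $T = \PSL_2(q)$ with $q = p^f$, $p$ odd, and necessarily $p \ne 3$ (otherwise $|T|_3 = q \le 3$ would force $q = 3$ and $T = \PSL_2(3) = \A_4$ would be solvable).

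I next translate the $p$-part constraints into congruences on $q$. With $\gcd(q,6) = 1$, $|T|_2 = (q^2-1)_2/2$ equals $4$ precisely when $q^2 \equiv 9 \pmod{16}$ (equivalently $q \equiv \pm 3 \pmod 8$), and $|T|_3 = (q^2-1)_3$ equals $3$ precisely when $q^2 \not\equiv 1 \pmod 9$; since the quadratic residues mod $9$ are $\{0,1,4,7\}$ and $q$ is coprime to $3$, this amounts to $q^2 \equiv 4$ or $7 \pmod 9$, and CRT combines these with $q^2 \equiv 9 \pmod{16}$ into the stated $q^2 \equiv 25$ or $121 \pmod{144}$. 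For the structure of $G/R \le \Aut(T) = \PGammaL_2(q)$, recall $\PGammaL_2(q)/T \cong \C_2 \times \C_f$ (diagonal $\times$ field). Setting $e = |G/R:T|$, the equalities $|G/R|_\ell = |T|_\ell$ for $\ell \in \{2,3\}$ give $e_2 = e_3 = 1$, i.e.\ $\gcd(e,6) = 1$; in particular $e$ is odd, so $G/R$ avoids the order-$2$ diagonal coset and must lie in $\PSigmaL_2(q) = T.\C_f$, whence $G/R = \PSL_2(q).\C_e$ with $e \mid f$.

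For (2), Dickson's classification of the subgroups of $\PSL_2(q)$ supplies an $\A_4 \le T$ for every such $q$, and since $|\A_4| = 12 = |T|_2 \cdot |T|_3$, this $\A_4$ is already Hall in $T$. The same $p$-part comparison yields $\gcd(|R|,12) = 1$, so the full preimage of $\A_4$ in $G$ is an extension $R.\A_4$ with $R$ a normal Hall subgroup of coprime order; Schur--Zassenhaus then supplies a complement $H \cong \A_4$, which is a Hall subgroup of $G$ of order $12$, proving that $12$ is a Hall number. The main obstacle I anticipate is the elementary but delicate modular bookkeeping that collapses ``$|T|_2 = 4$ and $|T|_3 = 3$'' into the compact congruence $q^2 \equiv 25, 121 \pmod{144}$, together with the verification that $G/R$ really sits inside $\PSigmaL_2(q)$ as $\PSL_2(q).\C_e$; the remaining steps are standard applications of the classification lemmas above and of Schur--Zassenhaus.
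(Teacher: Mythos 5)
Your argument is correct and follows essentially the same route as the paper: reduce via Lemma~\ref{lem:reduction} to an almost simple $G/R$, invoke Lemma~\ref{lem:simple-gps} to force $T=\PSL_2(q)$, convert the Hall-divisor condition into $q^2\equiv 25,121 \pmod{144}$, and lift an $\A_4\le T$ from Dickson's list through the coprime normal subgroup $R$. The only differences are presentational: you obtain the congruence by combining the $2$-part and $3$-part conditions via CRT rather than the paper's chain of equivalences, and you make the Schur--Zassenhaus step explicit where the paper leaves it implicit.
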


\begin{proof}
By Lemma~\ref{lem:reduction}, $G/R$ is an almost simple group.
Let $T=\soc(G/R)$.
Since $|G|_2=4$, it follows that $|T|_2=4$, and hence Sylow 2-subgroups of $T$ are abelian of order 4.
By Lemma~\ref{lem:simple-gps}, we conclude that $T=\PSL_2(q)$ with $q=p^f\equiv 3,5$ $(\mod 8)$ or $T=\PSL_2(2^2)\cong \A_5\cong \PSL_2(5)$. 
If $T=\PSL_2(4)$, then $G/R$ has a subgroup $\A_4$. 

Now we assume $T=\PSL_2(q)$ with $q=p^f\equiv 3,5$ $(\mod 8)$.
It follows that $f$ is odd, and $G/R=\PSL_2(q).\C_e\le\PSigmaL_2(q)$ with $e\div f$.
Since $12$ is a Hall divisor of $|\PSL_2(q)|=q(q^2-1)/2$ and $p$ is an odd prime, it follows either $12$ is a Hall divisor of $(q^2-1)/2$ or $q=3$.
Since $\PSL_2(q)$ is simple, $q>3$, and thus one can deduce that
\[\begin{array}{rcl}
&&\mbox{12 is a Hall divisor of ${q^2-1\over2}$}\\
 &\Longleftrightarrow& \mbox{24 is a Hall divisor of ${q^2-1}$}\\
 &\Longleftrightarrow & q^2-1= 24 x, \mbox{and $\gcd(x,6)=1$}\\
 &\Longleftrightarrow & q^2-1= 24 x, \mbox{and $x=6y+z$, with $z<6$ and $\gcd(z,6)=1$}\\
 &\Longleftrightarrow & q^2-1= 24 x=144y+24z, \mbox{with $z\in\{1,5\}$}\\
 &\Longleftrightarrow & q^2\equiv 24z+1\ (\mod 144),\ \mbox{with $z\in\{1,5\}$}.
 \end{array}\]
So $q^2\equiv 25,121$ $(\mod 144)$, as in part~(1).

Inspecting subgroups of $\PSL_2(q)$ given in \cite{Low-dim-book}, we conclude that $G/R$ indeed has a subgroup which is isomorphic to $\A_4$.
Now $\gcd(|R|,12)=1$, and thus $G=R.(G/R)$ has a subgroup $H$ that is isomorphic to $\A_4$, which is a Hall $\{2,3\}$-subgroup of $G$.
So $12$ is a Hall number.
\qed\end{proof}

The next lemma deals with the candidate $m=60$.

\begin{lemma}\label{lem:60}
Let $G$ be a nonsolvable group such that $60$ is a Hall divisor of $|G|$.
Then, letting $R$ be the solvable radical of $G$, one of the following holds:
\begin{enumerate}[{\rm(i)}]
\item $G/R=\PSL_2(q).\C_e\le \PSigmaL_2(q)$, and $G$ has a Hall subgroup $H\cong\A_5$, where $q^2\equiv 120k+1\ (\mod 3600)$ with $k\in\{1,7,11,13,17,19,23,29\}$ and $\gcd(e,30)=1$, or

\item $G/R=\PSL_2(q).\C_{5e}\le \PSigmaL_2(q)$, and $G/R$ has a Hall subgroup $H\cong\A_4\times\C_5$, where $q^2\equiv 25, 121\ (\mod 144)$ and $\gcd(e,30)=1$ or

\item $G/R_{5'}=\C_5\times\PSL_2(q).\C_e\le \C_5\times\PSigmaL_2(q)$, and $G$ has a Hall subgroup $H\cong\C_5\times\A_4$, where $q^2\equiv 25, 121\ (\mod 144)$ and $\gcd(e,30)=1$ or

\item $G=R:\PSL_2(5)$, and $G$ has a Hall subgroup $H=\PSL_2(5)\cong\A_5$.
\end{enumerate}
In particular, $60$ is a Hall number.
\end{lemma}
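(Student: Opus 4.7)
The plan is to parallel Lemma~\ref{lem:12}, with an extra three-way split reflecting that the Sylow 5-subgroup of~$G$ may sit in the socle $T=\soc(G/R)$, in the outer cyclic extension~$\C_e$, or in the solvable radical~$R$; these sources correspond exactly to cases~(i), (ii), (iii), while case~(iv) arises from the alternative socle $T\cong\A_5$.

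First I would apply Lemma~\ref{lem:reduction} to get that $G/R$ is almost simple with socle~$T$, and $|G|_2=4$ forces the Sylow 2-subgroup of~$T$ to be abelian of order~4. Lemma~\ref{lem:simple-gps} leaves the candidates $\PSL_2(q)$ with $q\equiv 3,5\pmod 8$, $\PSL_2(2^f)$, $\J_1$, and $\Ree(3^f)$ ($f>1$). I would discard $\J_1$ (Sylow 2 of order~8), $\Ree(3^f)$ (3-part exceeding~3), and $\PSL_2(2^f)$ unless $f=2$, giving $T\cong\PSL_2(4)\cong\A_5$. When $T\cong\A_5\cong\PSL_2(5)$, the condition $|G/R|_2=4$ rules out $G/R=\S_5$, so $G/R=\PSL_2(5)$, and Schur--Zassenhaus applied to $R\lhd G$ (using $\gcd(|R|,60)=1$) yields the split extension of case~(iv).

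The main analysis is with $T=\PSL_2(q)$, $q=p^f$ odd and $q\ne 5$. The primes $p=3,5$ are excluded since they would force $|T|_p$ to exceed the Hall bound, so $\gcd(q,30)=1$; moreover $|G/R|_2=4=|T|_2$ excludes the $\PGL_2$-type outer automorphism, so $G/R=\PSL_2(q).\C_e$ with $e\mid f$ and $e$ odd. Since $3\mid q^2-1$ automatically, matching $|G|_3=3$ forces $(q^2-1)_3=3$, $e_3=1$, $|R|_3=1$; combined with $(q^2-1)_2=8$ from $q\equiv\pm 3\pmod 8$, this yields $q^2\equiv 25,121\pmod{144}$ exactly as in Lemma~\ref{lem:12}. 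The contribution to $|G|_5=5$ must come from one of $(q^2-1)_5$, $e_5$, or $|R|_5$, producing the three subcases; in case~(i), $(q^2-1)_5=5$ with the above conditions is equivalent to $q^2-1=120k$ with $\gcd(k,30)=1$, hence $q^2\equiv 120k+1\pmod{3600}$ for the eight units $k$ modulo~30.

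The Hall subgroups are produced using the subgroup classification of $\PSL_2(q)$ in \cite{Low-dim-book}. In case~(i), $q\equiv\pm 1\pmod{10}$ with $q\equiv\pm 3\pmod 8$ ensures $\A_5\le\PSL_2(q)$; in cases~(ii) and~(iii), $q\equiv\pm 3\pmod 8$ together with $p\ge 7$ ensures $\A_4\le\PSL_2(p)\le\PSL_2(q)$. For case~(ii), the Sylow 5-subgroup of $G/R$ is generated by a field automorphism $\sigma\colon x\mapsto x^{p^{f/5}}$ fixing $\PSL_2(p)$ pointwise, so it centralises $\A_4$ and $\A_4\times\langle\sigma\rangle\cong\A_4\times\C_5$ sits inside $G/R$. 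For case~(iii), $|R|$ is coprime to~6 with $|R|_5=5$; setting $N:=O_{5'}(R)$, which is characteristic in $R$ and hence normal in~$G$, Fitting theory for solvable groups combined with $|R|$ coprime to~6 forces $R/N\cong\C_5$, and the conjugation action of $G/R$ on this $\C_5$ factors through $\Aut(\C_5)=\C_4$, killing both the nonabelian simple socle~$T$ and the odd cyclic factor~$\C_e$, so $G/N\cong\C_5\times\PSL_2(q).\C_e$ which admits $\C_5\times\A_4$ as a Hall subgroup. In each of~(i)--(iii) the Hall subgroup lifts to~$G$ by Schur--Zassenhaus since the kernel ($R$ or $N$) has order coprime to~60. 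I expect the subtlest step to be the direct-product identification in case~(iii)---showing that the Sylow 5-subgroup of~$R$ is normal in~$G$ with trivial $G/R$-action---while case~(ii) hinges on the finer fact that $\A_4$ already embeds in $\PSL_2(p)$ so that the field automorphism centralises it.
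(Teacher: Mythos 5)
Your proposal is correct and follows essentially the same route as the paper: the same four-way case division according to whether the Sylow $5$-subgroup lies in $T=\PSL_2(q)$, in the outer cyclic part, or in $R$ (plus the degenerate $q=5$ case), with the same key steps (the field automorphism of order $5$ centralising $\A_4\le\PSL_2(p)$, a normal $5$-complement in $R$, and Dickson's subgroup list for $\PSL_2(q)$). The only cosmetic differences are that you re-derive the reduction from Lemmas~\ref{lem:reduction} and~\ref{lem:simple-gps} instead of quoting Lemma~\ref{lem:12}, and you phrase the $5$-nilpotency of $R$ via $O_{5'}(R)$ rather than Burnside's normal $p$-complement criterion.
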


\begin{proof}
Obviously, in this case, 12 is also a Hall divisor of $|G|$.
By Lemma~\ref{lem:12}, the group $G/R=\PSL_2(q).\C_e\le \PSigmaL_2(q)$, where $q^2\equiv 25, 121\ (\mod 144)$.
Let $T=\PSL_2(q)$ be the socle of $G/R$.

Assume first that $5$ divides $|T|=|\PSL_2(q)|$.
Since $60$ is a Hall divisor of $|G|$, it follows that $60$ is a Hall divisor of $|T|=q(q^2-1)/2$, and so either $60$ is a Hall divisor of $(q^2-1)/2$ or $q=5$. 
If $q=5$, then $|T|=60$ and $G/R=\PSL_2(5)\cong \A_5$, so $G=R{:}T=R{:}\A_5$, with $\gcd(|R|,60)=1$, as in part~(iv).
Now assume that $60$ is a Hall divisor of $(q^2-1)/2$. 
Then one can deduce that
\[\begin{array}{rcl}
&&\mbox{60 is a Hall divisor of ${q^2-1\over2}$}\\
 &\Longleftrightarrow& \mbox{120 is a Hall divisor of ${q^2-1}$}\\
 &\Longleftrightarrow & q^2-1= 120 x, \mbox{and $\gcd(x,30)=1$}\\
 &\Longleftrightarrow & q^2-1= 120 x, \mbox{and $x=30y+k$, with $k<30$ and $\gcd(k,30)=1$}\\
 &\Longleftrightarrow & q^2-1= 120 x=3600y+120k, \mbox{with $k\in\{1,7,11,13,17,19,23,29\}$}\\
 &\Longleftrightarrow & q^2\equiv 120k+1\ (\mod 3600),\ \mbox{with $k\in\{1,7,11,13,17,19,23,29\}$}.
 \end{array}\]
Furthermore, an inspection of subgroups of $T=\PSL_2(q)$ in \cite{Low-dim-book} shows that $T$ has a subgroup which is isomorphic to $\A_5$. Now $\gcd(|R|,60)=1$, and thus $G=R.(G/R)$ has a subgroup $H$ that is isomorphic to $\A_5$, as in part~(i).

Next, assume $5$ divides $|G/R|$ but does not divide $|T|$.
Then $q=p^f$ with $f$ divisible by 5, and $G/R\ge T.\l\varphi\r=\PSL_2(p^f).\C_5$, where $\varphi$ is a field automorphism of $T$ of order 5.
It follows that $\varphi$ centralizes $\PSL_2(p)$, and so $\varphi$ centralizes the subgroups of $\PSL_2(p)$.
Since $\PSL_2(p)$ contains a subgroup which is isomorphic to $\A_4$, we conclude that $G/R$ has a Hall subgroup which is isomorphic to $\A_4\times\C_5$.
Since $\gcd(|R|,60)=1$, we conclude that $G$ has a Hall subgroup which is isomorphic to $\A_4\times\C_5$, as in part~(ii).

Finally, assume that 5 does not divide $|G/R|$.
Then 5 divides $|R|$.
Since 12 divides $|T|$, we have that $\gcd(|R|,12)=1$ and $|R|_5=5$.
Thus $R$ is a 5-nilpotent by Burnside's criterion (see \cite[p.\,280]{Robinson}), and hence $R=R_{5'}{:}R_5=R_{5'}{:}\C_5$.
As $G/R\rhd T=\PSL_2(q)$, we obtain that $G/R_{5'}\rhd R_5.T=R_5\times\PSL_2(q)$.
Thus $G/R_{5'}$ has a subgroup which is isomorphic to $\C_5\times\A_4$.
Further, as $\gcd(|R_{5'}|,60)=1$, it follows that $G$ itself has a subgroup which is isomorphic to $\C_5\times\A_4$, as in part~(iii).

We thus conclude that $G$ has a Hall subgroup which is isomorphic to $\A_5$ or $\C_5\times\A_4$.
In particular, we conclude that $60$ is indeed a Hall number.
\qed\end{proof}

The final lemma treats the candidate $m=24$.

\begin{lemma}\label{lem:24}
Let $G$ be a nonsolvable group such that $24$ is a Hall divisor of $|G|$, and let $R$ be the solvable radical of $G$.
Then one of the following holds:
\begin{enumerate}[{\rm(i)}]

\item $G/R=\J_1$, and $G$ has a Hall subgroup $H\cong \C_2\times\A_4$.

\item $G/R=\PSL_2(q).\C_e\le \PSigmaL_2(q)$, and $G$ has a Hall subgroup $H\cong\S_4$, where $q^2\equiv 49, 241\ (\mod 288)$ and $\gcd(e,6)=1$, or

\item $G/R_{2'}=\C_2\times\PSL_2(q).\C_e$ or $\SL_2(q).\C_e$, and $G$ has a Hall subgroup $H\cong\C_2\times\A_4$ or $\SL_2(3)$, where $q^2\equiv 25, 121\ (\mod 144)$ and $\gcd(e,6)=1$, or

\item $G/R=\PGL_2(q).\C_e\le \PGammaL_2(q)$, and $G$ has a Hall subgroup $H\cong\S_4$, where $q^2\equiv 25, 121\ (\mod 144)$ and $\gcd(e,6)=1$.

\end{enumerate}
In particular, $24$ is a Hall number.
\end{lemma}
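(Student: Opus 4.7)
The plan is to follow the same pattern as Lemmas~\ref{lem:12} and \ref{lem:60}. First I apply Lemma~\ref{lem:reduction} so that $G/R$ is almost simple with socle $T$ whose Sylow $2$-subgroup is abelian or isomorphic to $\D_8$. The hypothesis $24\ddiv|G|$ forces $|G|_2=8$ and $|G|_3=3$, hence $|T|_2\in\{4,8\}$ and $|T|_3\in\{1,3\}$. Combining Lemmas~\ref{lem:simple-gps} and \ref{lem:simple-gps-2}, the candidates for $T$ are $\PSL_2(q)$ with $q\equiv 3,5\;(\mod 8)$ (absorbing $\PSL_2(4)\cong\PSL_2(5)$), $\PSL_2(q)$ with $q\equiv 7,9\;(\mod 16)$, and $\J_1$; the remaining possibilities $\PSL_2(8)$ and $\Ree(3^f)$ are discarded because their $3$-parts exceed $3$.

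I then dispose of the two cases with $|T|_2=8$. If $T=\J_1$, then $\Aut(\J_1)=\J_1$ forces $G/R=\J_1$, and I invoke the standard fact that $\J_1$ has a Hall $\{2,3\}$-subgroup isomorphic to $\C_2\times\A_4$, yielding case~(i). If $T=\PSL_2(q)$ with $q\equiv 7,9\;(\mod 16)$, then any outer extension must be coprime to $6$ (a factor of $2$ would push $|G|_2\ge 16$, and a factor of $3$ would push $|G|_3\ge 9$), so $G/R=\PSL_2(q).\C_e\le\PSigmaL_2(q)$ with $\gcd(e,6)=1$. Translating $24\ddiv|T|$ via the Chinese Remainder Theorem modulo $288=32\cdot 9$, in the same manner as the displayed computation at the end of Lemma~\ref{lem:12}, I obtain $q^2\equiv 49,241\;(\mod 288)$. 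Since $q\equiv\pm 1\;(\mod 8)$ in this range, the subgroup lattice of $\PSL_2(q)$ recorded in~\cite{Low-dim-book} exhibits $\S_4\le\PSL_2(q)$, and this lifts through the coprime extension $1\to R\to G\to G/R\to 1$ by Schur-Zassenhaus to a Hall $\S_4$ in $G$, giving case~(ii).

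The remaining case is $T=\PSL_2(q)$ with $q\equiv 3,5\;(\mod 8)$, where $|T|_2=4$ and the missing factor of $2$ in $|G|$ must come either from $G/T$ or from $R$. The $12\ddiv|G|$ analysis of Lemma~\ref{lem:12} already forces $q^2\equiv 25,121\;(\mod 144)$. If the extra $2$ lies in $G/R$, then $G/R=\PGL_2(q).\C_e\le\PGammaL_2(q)$ with $\gcd(e,6)=1$; since $\PGL_2(q)$ contains $\S_4$ exactly when $q\equiv\pm 3\;(\mod 8)$, I lift $\S_4$ to a Hall subgroup of $G$, giving case~(iv). Otherwise $|R|_2=2$, and Burnside's normal $p$-complement theorem (applied exactly as in the $5$-nilpotent step of Lemma~\ref{lem:60}) gives $R=R_{2'}{:}\C_2$; the quotient $G/R_{2'}$ is then a central $\C_2$-extension of $G/R=\PSL_2(q).\C_e$. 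Since the Schur multiplier of $\PSL_2(q)$ equals $\C_2$ for the relevant $q$ (the exceptional values $q=4,9$ do not fall in our residue classes), this extension is either split, giving $G/R_{2'}=\C_2\times\PSL_2(q).\C_e$ with Hall subgroup $\C_2\times\A_4$, or nonsplit, giving $G/R_{2'}=\SL_2(q).\C_e$ with Hall subgroup $\SL_2(3)$ (the binary tetrahedral preimage of $\A_4$ under $\SL_2(q)\to\PSL_2(q)$). The coprime quotient $G\to G/R_{2'}$ then lifts the Hall subgroup back to $G$, yielding case~(iii).

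The main obstacle I anticipate is the last case: one must cleanly separate the two sources of the extra $2$-part of $|G|$, identify the correct central $\C_2$-extension via the Schur multiplier, and confirm that each of the candidate groups $\S_4$, $\C_2\times\A_4$, and $\SL_2(3)$ actually arises as a Hall subgroup rather than merely a subgroup of order $24$. Once the required embeddings are located in the explicit subgroup classification of $\PSL_2(q)$ from~\cite{Low-dim-book}, the coprime lifting via Schur-Zassenhaus completes the proof.
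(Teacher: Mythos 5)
Your proposal is correct and follows essentially the same route as the paper: reduction to an almost simple quotient, the case split on $|T|_2=8$ versus $|T|_2=4$, the congruence computation modulo $288$, Burnside's normal $p$-complement theorem to get $R=R_{2'}{:}\C_2$, the identification of $G/R_{2'}$ as $\C_2\times\PSL_2(q).\C_e$ or $\SL_2(q).\C_e$, and coprime (Schur--Zassenhaus) lifting; you are in fact slightly more explicit than the paper about the Schur multiplier and about $\Aut(\J_1)=\J_1$. The only blemish is the aside that $\PGL_2(q)$ contains $\S_4$ ``exactly when $q\equiv\pm3\ (\mod 8)$'' --- $\PGL_2(q)$ contains $\S_4$ for all odd $q>3$, and it is $\S_4\le\PSL_2(q)$ that is equivalent to $q\equiv\pm1\ (\mod 8)$ --- but this does not affect the argument since you only use the containment in the case $q\equiv\pm3\ (\mod 8)$.
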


\begin{proof}
Since $24$ is a Hall divisor of $|G|$, a Sylow 2-subgroup of $G$ is of order 8.
Let $R$ be the solvable radical of $G$.
By Lemma~\ref{lem:reduction}, $G/R$ is an almost simple group, with socle $T$ say.
Then the order $|T|_2=4$ or 8.

First, assume that $|T|_2=8$.
Then by Lemmas~\ref{lem:simple-gps} and \ref{lem:simple-gps-2}, we have that
$T=\PSL_2(q)$ with $q\equiv 7,9$ $(\mod 16)$, or $\J_1$, or $\PSL_2(8)$, or $\Ree(q)$ with $q=3^f$.
Since $|G|$ is not divisible by 9, the order $|T|$ is not divisible by 9.
Thus $T\not=\PSL_2(8)$ or $\Ree(3^f)$ with $f>1$.
If $T$ is the sporadic simple group $\J_1$, then $T$ has a Hall subgroup which is isomorphic to $\C_2\times\A_4$ by the Atlas \cite{Atlas}, and so $G=R{:}J_1$ has a Hall subgroup $\C_2\times\A_4$, as in part~(i).
Now suppose that $T=\PSL_2(q)$.
Since $24$ is a Hall divisor of $|T|=|\PSL_2(q)|=q(q^2-1)/2$ and $q$ is odd, it follows that $24$ is a Hall divisor of $(q^2-1)/2$, and thus we have that
\[\begin{array}{rcl}
&&\mbox{24 is a Hall divisor of ${q^2-1\over2}$}\\
 &\Longleftrightarrow& \mbox{48 is a Hall divisor of ${q^2-1}$}\\
 &\Longleftrightarrow & q^2-1= 48 x, \mbox{and $\gcd(x,6)=1$}\\
 &\Longleftrightarrow & q^2-1= 48 x, \mbox{and $x=6y+z$, with $z<6$ and $\gcd(z,6)=1$}\\
 &\Longleftrightarrow & q^2-1= 48 x=288y+48z, \mbox{with $z\in\{1,5\}$}\\
 &\Longleftrightarrow & q^2\equiv 48z+1\ (\mod 288),\ \mbox{with $z\in\{1,5\}$}.
 \end{array}\]
So $q^2\equiv 49,241$ $(\mod 288)$.
Indeed, in this case, $T=\PSL_2(q)$ has a Hall subgroup which is isomorphic to $\S_4$.
Moreover, since $\gcd(|R|,24)=1$, it follows that the group $G$ has a Hall subgroup of order 24.
This is as in part~(ii).

Now assume that $|T|_2=4$.
Then $T=\PSL_2(q)$, and $12$ is a Hall divisor of $|T|$, where $q^2\equiv 25, 121\ (\mod 144)$ by Lemma~\ref{lem:simple-gps}, and $T$ has a Hall subgroup which is isomorphic to $\A_4$.
In this case, we have that $|G/R|_2=4$ or 8.

We first deal with the case where $|G/R|_2=4$.
Then 2 divides $|R|$.
Since $\gcd\left(24,{|G|\over24}\right)=1$, we have that $|R|_2=2$.
Thus $R$ is a 2-nilpotent by Burnside's criterion (see \cite[p.\,280]{Robinson}), and hence $R=R_{2'}{:}R_2=R_{2'}{:}\C_2$.
As $G/R=\PSL_2(q).\l\varphi\r$ where $|\varphi|$ is odd, we obtain that $G/R_{2'}=R_2.(\PSL_2(q).\l\varphi\r)$.
It follows that $G/R_{2'}=\SL_2(q).\l\varphi\r$ or $(R_2\times\PSL_2(q)).\l\varphi\r$.
Thus $G/R_{2'}$ has a subgroup which is isomorphic to $2.\A_4$, where $2.\A_4\cong\SL(2,3)$ or $\C_2\times\A_4$.
Further, as $\gcd(|R_{2'}|,24)=1$, it follows that $G$ itself has a subgroup which is isomorphic to $\SL(2,3)$ or $\C_2\times\A_4$, as in part~(iii).

Suppose next that $|G/R|=8$.
Since $|T|_2=4$, we conclude that $G/R\ge\PGL_2(q)$.
Checking the subgroups of $\PGL_2(q)$ described in \cite{Low-dim-book} shows that $\PGL_2(q)$ indeed has a subgroup which is isomorphic to $\S_4$.
Since $\gcd(|R|,24)=1$, it follows that $G$ has a Hall subgroup which is isomorphic to $\S_4$, as in part~(iv).

In summary, the group $G$ has a Hall subgroup that is isomorphic to $\C_2\times\A_4$, $\S_4$ or $\SL_2(3)$.
Thus, in particular, we obtain that $24$ is a Hall number.
\qed\end{proof}

{\bf Proofs of Theorem~\ref{thm:main} and Corollary~\ref{cor:gps}:}

Let $m$ be a Hall number.
To complete the proofs of Theorem~\ref{thm:main} and Corollary~\ref{cor:gps}, we may assume that $m=ab$ such that $a,b>2$ and $\gcd(a,b)=1$.
Then $m=12$, 24 or 60 by Lemma~\ref{lem:candidates}.

Conversely, let $G$ be a nonsolvable group such that $m$ is a Hall divisor of $|G|$.
Then by Lemmas~\ref{lem:12}, \ref{lem:60} and \ref{lem:24}, all of the three numbers are Hall numbers, and $G$ has a Hall subgroup of order $m$ such that $(m,H,G)$ is as in the table of Corollary~\ref{cor:gps}.

Finally, let $m=3b$, where $b=4$, 8 or 15.
By Lemma~\ref{lem:number}, there are infinitely many primes $p$ such that 3 is a Hall divisor of $p-1$ and $b$ is a Hall divisor of $p+1$.
It follows that, for each line in the table of Corollary~\ref{cor:gps}, there are infinitely many choices for $q$.
\qed

\vskip0.3in
\end{document}